\DeclareMathOperator{\Var}{Var}
\DeclareMathOperator{\Cov}{Cov}
\DeclareMathOperator{\corr}{corr}
\DeclareMathOperator{\E}{E}
\DeclareMathOperator{\num}{num}
\newcommand{\abs}[1]{\left\lvert#1\right\rvert}
\newcommand{\norm}[1]{\left\lVert#1\right\rVert}
\newtheoremstyle{nonum}{}{}{\itshape}{}{\bfseries}{.}{ }{\thmnote{#3}}
\theoremstyle{nonum}
\newtheorem{thm}{}
\newtheorem{lemma}{}
\newtheorem{proposition}{}
\begin{document}
\title{Elementary proof of convergence to the mean-field model for the SIR process}
\author[$\dag$]{Ekkehard Beck\footnote{Email addresses: ebeck@u.northwestern.edu (Ekkehard Beck), armbrusterb@gmail.com (Benjamin Armbruster)}}
\author[ ]{Benjamin Armbruster}
\affil[$\dag$]{Department of Industrial Engineering and Management Sciences,
Northwestern University, Evanston, IL, 60208, USA}
\date{\today}

\maketitle

\begin{abstract}
The susceptible-infected-recovered (SIR) model has been used extensively to model disease spread and other processes. Despite the widespread usage of this ordinary differential equation (ODE) based model which represents the mean-field approximation of the underlying stochastic SIR process on contact networks, only few rigorous approaches exist and these use complex semigroup and martingale techniques to prove that the expected fraction of the susceptible and infected nodes of the stochastic SIR process on a complete graph converges as the number of nodes increases to the solution of the mean-field ODE model. Extending the elementary proof of convergence for the SIS process introduced by \citet{Armbruster2015} to the SIR process, we show convergence using only a system of three ODEs, simple probabilistic inequalities, and basic ODE theory. Our approach can also be generalized to many other types of compartmental models (e.g., susceptible-infected-recovered-susceptible (SIRS)) which are linear ODEs with the addition of quadratic terms for the number of new infections similar to the SI term in the SIR model.
\end{abstract}

\section{Introduction} 
In 1927, \citet{Kermack1927} introduced the susceptible-infected-recovered (SIR) model to study the plague and cholera epidemics in London and Bombay. Since then, the deterministic, ordinary differential equation (ODE) based SIR model has been extensively used to study the spread of infectious diseases having person-to-person transmission mechanisms \citep{AndMay1991} and to study similar processes in fields such as host-pathogen biology \citep{May1983}, chemistry \citep{Cardelli2008}, communications and computer science \citep{Kephart1993}, and social sciences \citep{Daley1964,Daley1965}.

Despite the importance of understanding the accuracy of such mean-field ODE models, most studies that address this question rely on numerical experiments and only few studies exist that rigorously analyze the accuracy of mean-field ODE models compared to the exact Markov models of the disease spread \citep{simon2010}. For the continuous-time stochastic SIR process on a complete network, \cite{Kurtz1970} was the first to show that the fraction of nodes in each disease state converges in probability uniformly over finite time intervals to the solution of the mean-field ODE model as the number of nodes increases. This proof relies on operator semigroup  techniques. In the context of the SIS process, \cite{simon2010} give a less technical summary of this proof and the proofs in \cite{Kurtz1971} and \cite{EthierKurtz1986}.

Related to the above approaches are those by \cite{Bena2008} and \cite{Bortolussi2013} which prove for the same setting convergence in probability \citep{Bortolussi2013} and in mean-square \citep{Bena2008}. Starting out with a discrete-time Markov chain (DTMC), they prove convergence of the corresponding continuous-time Markov chain (CTMC) to the mean-field ODE model using martingale and stochastic approximation algorithms techniques. Similar, convergence results also exist for variants of the SIR-models different to the classical one such as the SIR model on a configuration model network  specified by an arbitrary degree distribution \citep{Volz2008} where convergence to the deterministic limit was proved by \cite{Decreusefond2012} using again complex techniques drawn from stochastic differential equations (SDE) and martingale theory.  We thank a reviewer for mentioning the proof by \cite{Andersson2000} using time-changed Poisson processes, which is reasonably elementary.

Recently, \cite{Armbruster2015} introduced an elementary approach to show convergence for the SIS process.  Using only a system of two ODEs, basic ODE techniques, and Jensen's inequality, they show that the dynamics of the fraction of infected in the continuous-time stochastic SIS-process on a complete graph converges uniformly over finite time intervals in mean-square to the mean-field ODE as the population size increases. However, that approach cannot be easily extended since it requires the mean-field ODE to be one-dimensional.  This inspired us to develop in this paper a more general approach for the SIR process and prove that the fractions of susceptibles and infected nodes of the stochastic SIR-process on a complete graph converges uniformly over finite time intervals in mean-square to the solution of the mean-field ODE model as the number of nodes in the network increases. The approach remains elementary, using only a system of three ODEs describing the dynamics of the fraction of susceptibles and infected and the dynamics of the total stochasticity in the system; simple probability inequalities to bound the difference of the means of products such as $\E[SI]$ from the corresponding product of means, $\E[S]\E[I]$; and basic ODE techniques.

Extending our previous approach \citep{Armbruster2015}) to the SIR process forwards our agenda of showing that mean-field convergence results can be tackled using only basic probability and ODE theory. We hope to open up the field to more researchers that might not be comfortable using semigroup or martingale theory.

\section{Main result}\label{sec:mainResult}
Consider a complete graph of $n$ nodes and let $X_n(t)$ be a Markov process describing the state of nodes at time $t$. Each node can be in one of the three states: susceptible $S$, infected $I$, and recovered $R$ (immune). An infected node infects each susceptible neighbor at a rate $\tau/n$ and recovers at rate $\gamma$.  The $1/n$ scaling of the per edge transmission rate is necessary to obtain a mean-field limit.
We define $S_n(t):=\num_S (X_n(t))$ and $I_n(t):=\num_I (X_n(t))$ as the number of susceptible and infected nodes, and $s_n(t):=S_n(t)/n$ and $i_n(t):=I_n(t)/n$ as the susceptible and the infected fraction, respectively.  Clearly, the number of recovered nodes is $R_n(t):=\num_R (X_n(t))=n-S_n(t)-I_n(t)$. Over the course of this paper we may sometimes drop the dependence on $t$ and $n$ in our notation.  The exact trajectories of $\E[S_n(t)]$ and $\E[I_n(t)]$ can be calculated using the Kolmogorov (or master) equations, a system of $n^2$ differential equations:
\begin{multline}\label{eq:master}
	P[S_n = i,\ I_n=j]' = -((\tau/n)ij+\gamma j)P[S_n = i,\ I_n=j]\\
 + (\tau/n)(i+1)(j-1)P[S_n = i+1,\ I_n=j-1]\\
  + \gamma (j+1) P[S_n = i,\ I_n=j+1]
 \quad \text{for all $0\leq i,j\leq n$.}
\end{multline}

Our goal is to prove the following theorem, which shows that the dynamics of the expectation of the fraction of susceptible and infected nodes of the stochastic SIR process converges in mean-square on a complete graph and finite-time intervals to the mean-field approximation as the population size increases.

\begin{thm}[Theorem 1]\label{thm1}
If $s_n(0)\to s_0$ and $i_n(0)\to i_0$ as $n\rightarrow\infty$ and $s_0$ and $i_0 \in [0,1]$, then 
$(s_n(t),i_n(t))$ converges uniformly in mean square, i.e., 
\begin{equation*}
\E[\norm{(s_n(t),i_n(t))-y(t)}^2]\rightarrow 0,
\end{equation*}
on any time finite interval $[0,T]$ to the solution of the mean-field
equations: 
\begin{subequations}\label{eq:1}
\begin{align} 
\begin{split}\label{eq:1a}
y_1'=-\tau y_1y_2,\quad y_1(0)=s_0, 
\end{split}\\
\begin{split} \label{eq:1b}
y_2'= \tau y_1y_2-\gamma y_2,\quad y_2(0)=i_0. 
\end{split}
\end{align}
\end{subequations}
\end{thm}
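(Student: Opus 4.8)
The plan is to reduce the mean-square statement to two Gr\"onwall estimates by a variance/squared-bias split. Write $\bar s_n(t):=\E[s_n(t)]$, $\bar i_n(t):=\E[i_n(t)]$, let
\[
v_n(t):=\E\!\left[(s_n(t)-\bar s_n(t))^2+(i_n(t)-\bar i_n(t))^2\right]
\]
be the total variance, and $e_n(t):=\norm{(\bar s_n(t),\bar i_n(t))-y(t)}$ the deterministic error. Since $\E[s_n-\bar s_n]=\E[i_n-\bar i_n]=0$, the cross term drops and
\[
\E[\norm{(s_n(t),i_n(t))-y(t)}^2]=v_n(t)+e_n(t)^2 ,
\]
so it suffices to prove $\sup_{[0,T]}v_n\to0$ and $\sup_{[0,T]}e_n\to0$.

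The needed ordinary differential equations come straight from \eqref{eq:master} (equivalently, from the generator): multiplying by $i$, $j$, $i^2$, $j^2$, $ij$, summing, and rescaling yields exact equations for $\bar s_n,\bar i_n$ and for $\E[s_n^2],\E[i_n^2],\E[s_ni_n]$. The first two are
\[
\bar s_n'=-\tau\,\E[s_ni_n],\qquad \bar i_n'=\tau\,\E[s_ni_n]-\gamma\bar i_n ,
\]
so, writing $c_n:=\Cov(s_n,i_n)=\E[s_ni_n]-\bar s_n\bar i_n$, the pair $(\bar s_n,\bar i_n)$ solves the mean-field system \eqref{eq:1} perturbed by the vector $(-\tau c_n,\,\tau c_n)$. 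Expanding the three second-moment equations about the means produces a single ODE for $v_n$ whose right-hand side splits into (i) dissipative terms such as $-2\tau\bar i_n\,\E[(s_n-\bar s_n)^2]$ and $-2\gamma\,\E[(i_n-\bar i_n)^2]$; (ii) third centered moments like $\E[(s_n-\bar s_n)^2(i_n-\bar i_n)]$; (iii) terms linear in $c_n$; and (iv) a handful of terms proportional to $1/n$, arising because a single transition changes a rescaled second moment by $O(1/n^2)$. This is where the real work lies. Because $s_n,i_n\in[0,1]$ pointwise, $\abs{s_n-\bar s_n}\le1$ and $\abs{i_n-\bar i_n}\le1$, so $\abs{\E[(s_n-\bar s_n)^2(i_n-\bar i_n)]}\le\E[(s_n-\bar s_n)^2]\le v_n$ and similarly for the other third moment, while Cauchy--Schwarz and AM--GM give $\abs{c_n}\le\tfrac12 v_n$; bounding also $\bar s_n,\bar i_n\in[0,1]$, the terms (i)--(iii) collapse into a single $Cv_n$ and (iv) into $C'/n$, leaving
\[
v_n'(t)\le C\,v_n(t)+\frac{C'}{n},\qquad v_n(0)=0,
\]
with $C,C'$ depending only on $\tau,\gamma$ (the initial configuration being deterministic, $v_n(0)=0$).

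Two Gr\"onwall arguments now finish the proof. First, the displayed inequality gives $v_n(t)\le\frac{C'}{Cn}\bigl(e^{CT}-1\bigr)$ on $[0,T]$, hence $\sup_{[0,T]}v_n=O(1/n)\to0$ and in particular $\sup_{[0,T]}\abs{c_n}\to0$. Second, the mean-field vector field $(y_1,y_2)\mapsto(-\tau y_1y_2,\ \tau y_1y_2-\gamma y_2)$ is Lipschitz on $[0,1]^2$, in which both $(\bar s_n,\bar i_n)$ and $y$ remain (nonnegativity and $s+i\le1$ are preserved); the standard Gr\"onwall bound for the perturbed ODE therefore gives
\[
e_n(t)\le e^{LT}\Bigl(e_n(0)+\sqrt2\,\tau\!\int_0^T\!\abs{c_n(s)}\,ds\Bigr),
\]
where $e_n(0)=\norm{(s_n(0),i_n(0))-(s_0,i_0)}\to0$ by hypothesis and the integral is $O(1/n)$, so $\sup_{[0,T]}e_n\to0$. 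Substituting into $\E[\norm{(s_n,i_n)-y}^2]=v_n+e_n^2$ proves the theorem. The one point needing genuine care is the derivation of the $v_n$-inequality: one must do the third-moment bookkeeping honestly and check that, after the boundedness and Cauchy--Schwarz estimates, every non-dissipative contribution really does reduce to $O(v_n)+O(1/n)$, with no leftover terms that are small but not of order $1/n$.
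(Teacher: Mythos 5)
Your proposal is correct and follows essentially the same route as the paper: moment ODEs derived from the generator, a bias--variance decomposition of the mean-square error, bounds on product expectations (equivalently, third centered moments and covariances) in terms of the total variance using only $s_n,i_n\in[0,1]$ and Cauchy--Schwarz/AM--GM, and Gr\"onwall-type comparison. The only difference is organizational --- you run two sequential Gr\"onwall estimates (first showing $\sup_{[0,T]}v_n=O(1/n)$, then feeding $\abs{c_n}\le v_n/2$ into the perturbed mean equations), whereas the paper closes everything into a single three-dimensional ODE system for $(\E[s],\E[i],\Var[s]+\Var[i])$ with bounded coefficient functions $h_{k,n}$ and applies one perturbation lemma --- and your third-moment bookkeeping, which you rightly flag as the point needing care, does check out.
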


\section{Proof of the main result} 
\subsection{Preliminaries}
In this subsection we outline our approach and handle some routine technicalities.  In the following subsection we establish the set of differential equations which describe the dynamics of the first and second moments of the fractions of nodes of the stochastic SIR process.  The problem with those differential equations is the presence of $\E[si]$, $\E[s^2i]$, and $\E[si^2]$ terms on the right hand sides.
Then in Section~\ref{sec:bounds} we develop bounds for these terms.  These then allow us in Section~\ref{sec:closure} to obtain a system of ODEs equivalent to that in Section~\ref{sec:diffeqs} but which is closed, so that the right hand sides are functions of the variables on the left hand sides.
The system of ODEs describes the dynamics of the expectations and the dynamics of the sum of the variances.
Finally in Section~\ref{sec:convergence}, we bound the  solution of these ODEs to the solution of the mean-field equations \eqref{eq:1}.

\paragraph{Existence and uniqueness.}
Since the right hand sides of equations (\ref{eq:1}) are smooth (i.e., continuously differentiable), a unique solution exists to the mean-field equations. Further, since it is easy to see that $(y_1(t),y_2(t))$ remains in $[0,1]^2$, the solution exists for all times $t$.

\subsection{Initial set of differential equations}\label{sec:diffeqs}
Let $\num_{SI}(X(t))$ be the number of edges which connect a susceptible and an infected individual. Since we are on a complete network, $\num_{SI}(X(t))=\num_S(X(t))\num_I(X(t))$, or $S\cdot I$ in our abbreviated notation.  We now start by describing aspects of the stochastic process, \eqref{eq:master}, from Section~\ref{sec:mainResult} using the following differential equations of the means.

\begin{proposition}[Proposition 2]\label{prop2}
The following differential equations hold:
\begin{subequations}\label{eq:2}
\begin{align}
\label{eq:2a}
\E[S]'&=-(\tau/n)\E[SI],\\
\label{eq:2b}
\E[I]'&=(\tau/n)\E[SI]-\gamma \E[I],\\
\label{eq:2c}
\E[S^2]'&=-(\tau/n)(2\E[S^2I]-\E[SI]),\\
\label{eq:2d}
\E[I^2]'&=-(\tau/n)(2\E[SI^2]+\E[SI])-\gamma(2\E[I^2]-\E[I]).
\end{align}
\end{subequations}
\end{proposition}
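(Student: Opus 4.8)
The plan is to derive each ODE from the Kolmogorov equations \eqref{eq:master} by the standard generator computation: for any function $f$ of the state, $\frac{d}{dt}\E[f(X_n(t))] = \E[(\mathcal{L}f)(X_n(t))]$, where $\mathcal{L}$ is the infinitesimal generator of the Markov process. Concretely, from any state with $S_n=i$, $I_n=j$ there are exactly two kinds of transitions: an infection event at rate $(\tau/n)ij$ which sends $(i,j)\mapsto(i-1,j+1)$, and a recovery event at rate $\gamma j$ which sends $(i,j)\mapsto(i,j-1)$. So for a test function $f$,
\begin{equation*}
(\mathcal{L}f)(i,j) = (\tau/n)ij\,\bigl[f(i-1,j+1)-f(i,j)\bigr] + \gamma j\,\bigl[f(i,j-1)-f(i,j)\bigr].
\end{equation*}
I would verify this generator identity is consistent with \eqref{eq:master} (it is just the adjoint relation), and then simply plug in $f=S$, $f=I$, $f=S^2$, $f=I^2$ in turn.

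For $f(i,j)=i$: the infection term contributes $(\tau/n)ij\cdot(-1)$ and the recovery term contributes $0$, giving $\E[S]' = -(\tau/n)\E[SI]$, which is \eqref{eq:2a}. For $f(i,j)=j$: the infection term gives $(\tau/n)ij\cdot(+1)$ and recovery gives $\gamma j\cdot(-1)$, so $\E[I]' = (\tau/n)\E[SI]-\gamma\E[I]$, which is \eqref{eq:2b}. For $f(i,j)=i^2$: the infection jump changes $i^2$ by $(i-1)^2-i^2 = -2i+1$, so that term contributes $(\tau/n)ij(-2i+1) = -(\tau/n)(2i^2j - ij)$; recovery contributes nothing. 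Taking expectations yields \eqref{eq:2c}. For $f(i,j)=j^2$: the infection jump changes $j^2$ by $(j+1)^2-j^2 = 2j+1$, contributing $(\tau/n)ij(2j+1) = (\tau/n)(2ij^2+ij)$; the recovery jump changes $j^2$ by $(j-1)^2-j^2 = -2j+1$, contributing $\gamma j(-2j+1) = -\gamma(2j^2-j)$. Summing and taking expectations gives $\E[I^2]' = (\tau/n)(2\E[SI^2]+\E[SI]) - \gamma(2\E[I^2]-\E[I])$.

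I should double-check the sign conventions against the stated equations: \eqref{eq:2c} is written as $-(\tau/n)(2\E[S^2I]-\E[SI])$, which matches my $-(\tau/n)(2i^2j-ij)$, good; and \eqref{eq:2d} is written as $-(\tau/n)(2\E[SI^2]+\E[SI]) - \gamma(2\E[I^2]-\E[I])$. My computation gave $+(\tau/n)(2\E[SI^2]+\E[SI])$, so there is an apparent sign discrepancy in the first term that I would need to reconcile — most likely I have mis-signed the increment $(j+1)^2-j^2$ relative to the paper's sign convention, or the paper intends a different arrangement; in any case this is a routine check of arithmetic once the generator is pinned down, and the honest thing is to recompute carefully from \eqref{eq:master} directly (multiply the master equation for $P[S_n=i,I_n=j]$ by $i$, $j$, $i^2$, $j^2$ and sum over $i,j$, shifting indices) to get the signs exactly as stated. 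There is no conceptual obstacle here at all: the only mild subtlety is keeping the boundary terms at $i=0$ and $j=0,n$ from causing trouble, which they do not because the rates $ij$ and $j$ vanish there, so the index shifts in the summation are harmless. The "hard part" is purely bookkeeping — making sure every $+1$ and $-1$ in the quadratic increments lands with the sign matching \eqref{eq:2}.
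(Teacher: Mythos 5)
Your proposal is correct and takes essentially the same route as the paper: the paper likewise applies the Kolmogorov backward equations, writing $b(x)=\sum_{\hat{x}\neq x}Q(x,\hat{x})(a(\hat{x})-a(x))$ and substituting $a=\num_S,\ \num_I,\ \num_S^2,\ \num_I^2$ with exactly the increments you compute. On the one point you flagged: your sign for the infection term in $\E[I^2]'$ is the correct one --- a new infection increases $I^2$ by $2I+1$ at rate $(\tau/n)SI$, so the contribution is $+(\tau/n)(2\E[SI^2]+\E[SI])$, and the leading minus sign in the displayed \eqref{eq:2d} is a typo in the statement, as confirmed both by the normalized equation \eqref{eq:3d} (which carries the plus sign) and by the increment $(\num_I(x)+1)^2-\num_I(x)^2=2\num_I(x)+1$ computed in the paper's own proof.
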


\begin{proof}
These equations can be found in papers focusing on pair-models which build on them \citep{Keeling1999b,Rand1999}.  Our proof is more formal and takes the same approach as in Proposition 2 of \cite{Armbruster2015}, which proves a similar result for the SIS process. Formally, for each equation in (\ref{eq:2}) we are merely using the Kolmogorov backward equations (see any undergraduate textbook such as \cite{Ross2007}) where for a Markov process $X(t)$ with transition rate matrix $Q$ and any function $a(x)$, 
\begin{equation*}
 \E[a(X(t))]'=\E[b(X(t))],
\end{equation*}
where we define
\begin{equation*}
b(x):=\sum_{\hat{x}}Q(x,\hat{x}) a(\hat{x}).
\end{equation*}
Using the fact that $Q(x,x)=-\sum_{\hat{x}\neq x}Q(x,\hat{x})$,
\begin{equation*}
b(x)=\sum_{\hat{x}\neq x}Q(x,\hat{x})(a(\hat{x})-a(x)).
\end{equation*}

The terms in the sum are products of the transition rates and the size of the resulting changes in $a(X(t))$. In our case, we only need to handle two types of transitions: the network configuration, $x$, can transition to states $\hat{x}$ with either an additional infection or an infection that recovered. The first occurs at an aggregate rate of $(\tau/n) \num_{SI}(x)$ while the second at rate $\gamma \num_I(x)$. Transitions due to an additional infection increase the number of infected and decrease the number of susceptibles by one, $\num_I(\hat{x})-\num_I(x)=1$ and $\num_S(\hat{x})-\num_S(x)=-1$. Choosing $a(x)=\num_I(x)$ and $a(x)=\num_S(x)$, then explains (\ref{eq:2a}) and (\ref{eq:2b}), respectively. We explain (\ref{eq:2c}) and (\ref{eq:2d}) by choosing $a(x)=\num_S(x)^2$ and $a(x)=\num_I(x)^2$, respectively, and noting that for an additional infection,
\begin{equation*}
\num_S(\hat{x})^2-\num_S(x)^2=(\num_S(x)-1)^2-\num_S(x)^2=-(2\num_S(x)-1),
\end{equation*}
\begin{equation*}
\num_I(\hat{x})^2-\num_I(x)^2=(\num_I(x)+1)^2-\num_I(x)^2=2\num_I(x)+1,
\end{equation*}
while for a transition due to an infection that recovered, $\num_S(\hat{x})=\num_S(x)$ and
\begin{equation*}
\num_I(\hat{x})^2-\num_I(x)^2=(\num_I(x)-1)^2-\num_I(x)^2=-(2\num_I(x)-1).
\end{equation*}
\end{proof}

We normalize \eqref{eq:2a}--\eqref{eq:2b} and \eqref{eq:2c}--\eqref{eq:2d} by dividing by $n$ and $n^2$ respectively:
\begin{subequations}\label{eq:3}
\begin{align}
\begin{split}\label{eq:3a}
\E[s]'=-\tau \E[si],
\end{split}\\
\begin{split}\label{eq:3b}
\E[i]'=\tau \E[si]-\gamma \E[i],
\end{split}\\
\begin{split}\label{eq:3c}
\E[s^2]'=-\tau(2\E[s^2i]-\E[si]/n),
\end{split}\\
\begin{split}\label{eq:3d}
\E[i^2]'=\tau(2\E[si^2]+\E[si]/n)-\gamma(2\E[i^2]-\E[i]/n).
\end{split}
\end{align}
\end{subequations}
This highlights the $O(1/n)$ terms that disappear as $n\rightarrow \infty$. Figure~\ref{fig1} illustrates this for a numerical example.  The difficulty with these equations, \eqref{eq:3}, is that they are not closed, because they do not describe how to evaluate the product terms (i.e., $\E[si]$, $\E[s^2i]$, and $\E[si^2]$) on the right hand sides.

\begin{figure}[h]
	\centering
	\includegraphics[scale=0.5]{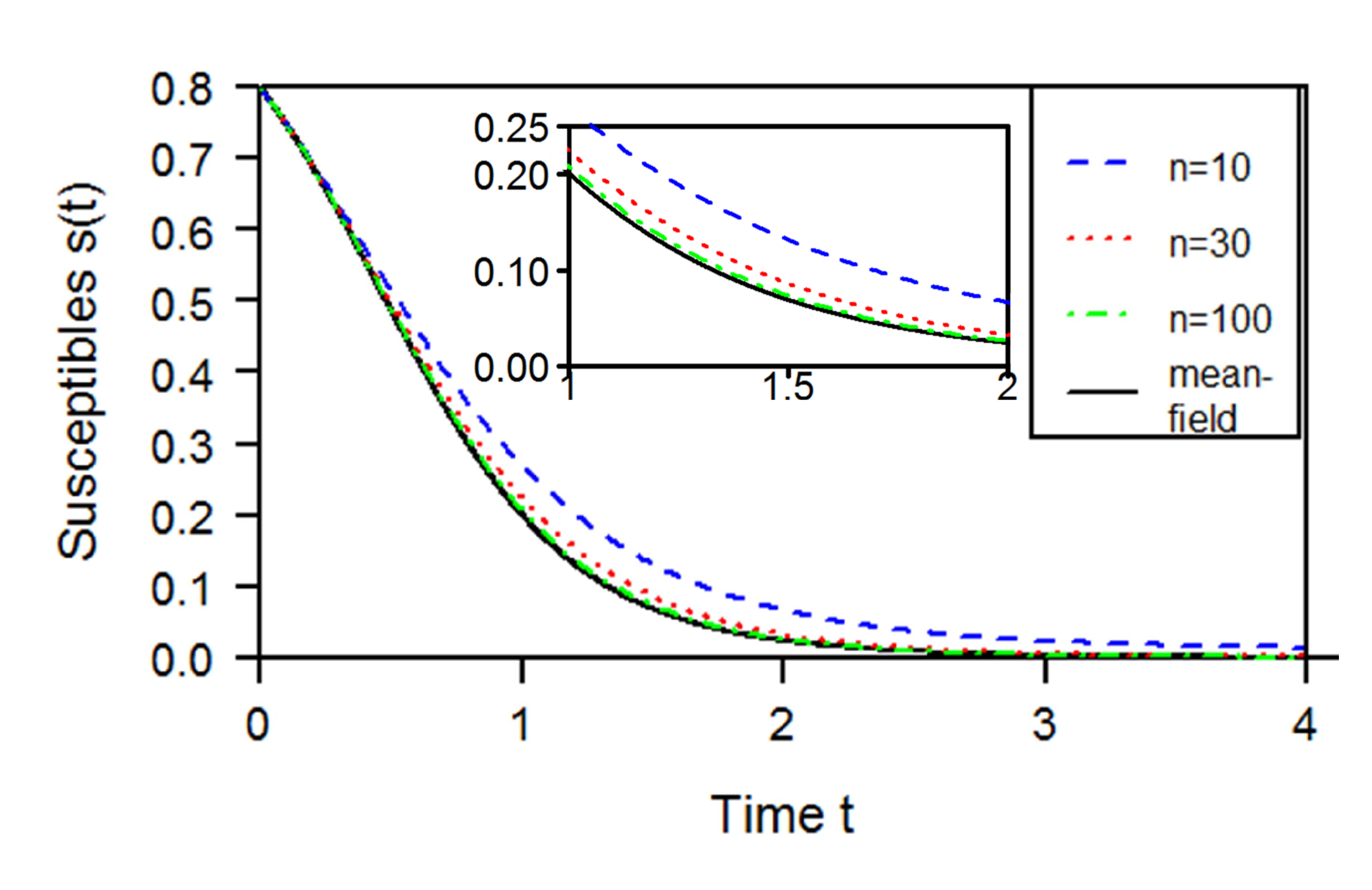}
	\caption{SIR epidemic fraction of susceptibles with $\tau = 3, \gamma=0.25$. The solution of the mean-field equations (\ref{eq:1}) (black curve) in comparison to the expected fraction, $\E[s_n(t)]$, calculated using the Kolmogorov equations \eqref{eq:master}, for increasing number of nodes: $n=10$ (blue curve), $n=30$ (red curve), and $n=100$ (green curve).}
	\label{fig1}
\end{figure}

\subsection{Bounds on the expectation of products}\label{sec:bounds}

Using \nameref{lemma5} we bound the difference of the expectations of the product terms in (\ref{eq:3}) (i.e., $\E[si]$, $\E[s^2i]$, and $\E[si^2]$) to the corresponding product of expectations (i.e., $\E[s]\E[i]$, $\E[s]^2\E[i]$, and $\E[s]\E[i]^2$).

\begin{lemma}[Lemma 3]\label{lemma3}
For random variables $Y$ and $Z$, 
$\Var[Y+Z] \le 2(\Var[Y]+\Var[Z])$.
\end{lemma}
\begin{proof}
 Note that,
\begin{equation*}
\begin{split}
\Var[Y+Z] & =\Var[Y]+\Var[Z]+2\corr(Y,Z)\sqrt{\Var[Y]\Var[Z]},\\
&\le \Var[Y]+\Var[Z]+2\sqrt{\Var[Y]\Var[Z]},\\
&\le \Var[Y]+\Var[Z]+2(\Var[Y]+\Var[Z])/2,\\
&=2(\Var[Y]+\Var[Z]),
\end{split}
\end{equation*}
where the last inequality holds because the geometric mean is less than or equal to the arithmetic mean.
\end{proof}

\begin{lemma}[Lemma 4]\label{lemma4}
Consider a random variable $Y$ in $[0,1]$. Then 
$\Var[Y^2]\le 4\Var[Y]$.
\end{lemma}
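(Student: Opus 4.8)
The plan is to use the elementary fact that for any random variable $W$ and any constant $c$ one has $\Var[W]\le \E[(W-c)^2]$ (since $\E[(W-c)^2]=\Var[W]+(\E[W]-c)^2$), the key point being that we do \emph{not} center at the mean of $W$. Applying this to $W=Y^2$ with the deliberately suboptimal choice $c=\E[Y]^2$ gives
\[
\Var[Y^2]\le \E\bigl[(Y^2-\E[Y]^2)^2\bigr],
\]
which is the inequality I would start from.

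Next I would factor $Y^2-\E[Y]^2=(Y-\E[Y])(Y+\E[Y])$. Because $Y\in[0,1]$ we also have $\E[Y]\in[0,1]$, hence $0\le Y+\E[Y]\le 2$ and therefore $(Y+\E[Y])^2\le 4$ pointwise. Substituting this bound,
\[
\E\bigl[(Y^2-\E[Y]^2)^2\bigr]=\E\bigl[(Y-\E[Y])^2(Y+\E[Y])^2\bigr]\le 4\,\E\bigl[(Y-\E[Y])^2\bigr]=4\Var[Y],
\]
and chaining with the first inequality yields $\Var[Y^2]\le 4\Var[Y]$. An equivalent route is to introduce an independent copy $Z$ of $Y$, write $\Var[Y^2]=\tfrac12\E[(Y^2-Z^2)^2]=\tfrac12\E[(Y-Z)^2(Y+Z)^2]$, bound $(Y+Z)^2\le 4$, and use $\E[(Y-Z)^2]=2\Var[Y]$; this gives the same constant and has the same flavor as the proof of \nameref{lemma3}.

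I do not anticipate any real obstacle here: the only idea needed is to center $Y^2$ at $\E[Y]^2$ rather than at $\E[Y^2]$, so that the difference factors as a product of $Y-\E[Y]$ with a term whose square is controlled by the boundedness of $Y$; everything after that is a one-line computation. A sharper constant would require more care, but $4$ is all that Section~\ref{sec:bounds} subsequently uses.
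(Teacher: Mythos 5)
Your proof is correct, and it takes a genuinely different route from the paper's. The paper decomposes $Y^2=(Y-\E[Y])Y+\E[Y]Y$, invokes \nameref{lemma3} to get $\Var[Y^2]\le 2(\Var[(Y-\E[Y])Y]+\Var[\E[Y]Y])$, and then bounds each summand by $\Var[Y]$ (the first via $\Var[W]\le\E[W^2]$ and $\abs{Y}\le 1$, the second via $\Var[\E[Y]Y]=\E[Y]^2\Var[Y]$). You instead use the off-center second moment bound $\Var[Y^2]\le\E[(Y^2-\E[Y]^2)^2]$, factor the difference of squares, and bound $(Y+\E[Y])^2\le 4$ pointwise; your independent-copy variant is likewise valid since $\Var[V]=\tfrac12\E[(V_1-V_2)^2]$ and $\E[(Y-Z)^2]=2\Var[Y]$. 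Both arguments hinge on the same mechanism --- extracting a factor of $Y-\E[Y]$ and controlling the cofactor by boundedness --- and both land on the constant $4$, which the paper shows is tight. What your route buys is economy: it is a two-line computation that dispenses with \nameref{lemma3} entirely (which the paper uses only here), and it makes transparent where the factor $4$ comes from, namely $(Y+\E[Y])^2\le 4$. What the paper's route buys is that the sum-splitting pattern of \nameref{lemma3} recurs naturally in the style of argument used throughout Section~\ref{sec:bounds}. Either proof serves the downstream use in \nameref{lemma5} equally well.
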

\begin{proof}
This proof is adapted from \cite{Giraudo2014}. Note that $Y^2=(Y-\E[Y])Y+\E[Y]Y$. Applying \nameref{lemma3}, we obtain
\begin{equation}\label{eq:7}
\begin{aligned}
\Var[Y^2]\le 2(\Var[(Y-\E[Y])Y]+\Var[\E[Y]Y]).
\end{aligned}
\end{equation}
From the definition,
\begin{align*}
\Var[(Y-\E[Y])Y]&=\E[((Y-\E[Y])Y)^2]-\E[(Y-\E[Y])Y]^2\\
&\le \E[((Y-\E[Y])Y)^2]\le \E[(Y-\E[Y])^2]=\Var[Y]
\end{align*}
where the last inequality is due to $\abs{Y}\le 1$. Substituting back into (\ref{eq:7}), we obtain
\begin{equation*}
\Var[Y^2]\le 2(\Var[Y]+\E[Y]^2 \Var[Y]) \le 4 \Var[Y].
\end{equation*} 
\end{proof}

The following example shows that the factor of 4 in \nameref{lemma4} is tight. Consider $P[Y=1]=P[Y=1 - 2\delta]=1/2$. Then $\Var[Y]=\delta^2$ and $\Var[Y^2]=4\delta^2(1-2\delta+\delta^2)$, which asymptotically equals $4\delta^2$ as $\delta \rightarrow 0$.

\begin{lemma}[Lemma 5]\label{lemma5}
For random variables $Y$ and $Z$ in $[0,1]$,
\begin{subequations}\label{eq:4}
\begin{align}
\label{eq:4a}
\abs{\E[YZ]-\E[Y]\E[Z]}&\le (\Var[Y]+\Var[Z])/2,\\
\label{eq:4b}
\abs{\E[Y^2Z]-\E[Y]^2\E[Z]}&\le 2(\Var[Y]+\Var[Z]).
\end{align}
\end{subequations}
\end{lemma}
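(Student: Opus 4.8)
The plan is to recognize that the left-hand side of each inequality is (up to a correction term) an absolute covariance, and then to bound it with the Cauchy--Schwarz inequality $\abs{\Cov(A,B)}\le\sqrt{\Var[A]\,\Var[B]}$ followed by the arithmetic--geometric mean inequality $2\sqrt{ab}\le a+b$. For \eqref{eq:4a} this is immediate: writing $\E[YZ]-\E[Y]\E[Z]=\Cov(Y,Z)$, the correlation inequality already used in the proof of \nameref{lemma3} (or, equivalently, $0\le\Var[Y\pm Z]=\Var[Y]+\Var[Z]\pm 2\Cov(Y,Z)$) gives $\abs{\Cov(Y,Z)}\le\sqrt{\Var[Y]\Var[Z]}$, and one application of $2\sqrt{ab}\le a+b$ yields $\abs{\Cov(Y,Z)}\le(\Var[Y]+\Var[Z])/2$. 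Note this part does not even need $Y,Z\in[0,1]$.

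For \eqref{eq:4b}, the subtlety is that $\E[Y^2Z]-\E[Y]^2\E[Z]$ is not itself a covariance, since $\E[Y^2]\neq\E[Y]^2$ in general. I would therefore split off the discrepancy,
\begin{equation*}
\E[Y^2Z]-\E[Y]^2\E[Z]=\bigl(\E[Y^2Z]-\E[Y^2]\E[Z]\bigr)+\bigl(\E[Y^2]-\E[Y]^2\bigr)\E[Z]=\Cov(Y^2,Z)+\E[Z]\,\Var[Y].
\end{equation*}
The second summand is bounded by $\Var[Y]$ because $0\le\E[Z]\le 1$. For the first, $\abs{\Cov(Y^2,Z)}\le\sqrt{\Var[Y^2]\,\Var[Z]}$, and here \nameref{lemma4} supplies $\Var[Y^2]\le 4\Var[Y]$, giving $\abs{\Cov(Y^2,Z)}\le 2\sqrt{\Var[Y]\Var[Z]}\le\Var[Y]+\Var[Z]$. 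Adding the two pieces yields $\abs{\E[Y^2Z]-\E[Y]^2\E[Z]}\le 2\Var[Y]+\Var[Z]\le 2(\Var[Y]+\Var[Z])$, as claimed.

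The computation is entirely routine, so there is no genuine obstacle; the main thing to watch is the bookkeeping of constants in \eqref{eq:4b}. One must insert the bound $\Var[Y^2]\le 4\Var[Y]$ from \nameref{lemma4} inside the square root and apply AM--GM only afterward (obtaining $\le\Var[Y]+\Var[Z]$), rather than applying AM--GM first to $\Var[Y^2]$ and $\Var[Z]$, which would give the weaker $(4\Var[Y]+\Var[Z])/2$ and a final constant exceeding $2$. It is also worth recording that the hypothesis $Y\in[0,1]$ enters only through \nameref{lemma4}, while $Z\in[0,1]$ is used only to bound $\E[Z]\le 1$.
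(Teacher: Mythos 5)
Your proof is correct and follows essentially the same route as the paper's: the same decomposition of $\E[Y^2Z]-\E[Y]^2\E[Z]$ into $\Cov(Y^2,Z)+\E[Z]\Var[Y]$ (the paper phrases it as a triangle inequality), the same use of \nameref{lemma4} inside the Cauchy--Schwarz bound, and the same AM--GM step. Your bookkeeping is in fact marginally tighter (you obtain $2\Var[Y]+\Var[Z]$ before relaxing to $2(\Var[Y]+\Var[Z])$), but this makes no difference to the stated result.
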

\begin{proof}  
From the facts that $\Cov[Y,Z]=\E[YZ]-\E[Y]\E[Z]$, $\Cov[Y,Z]=\corr[Y,Z]\sqrt{\Var[Y]\Var[Z]}$, and $\abs{\corr[Y,Z]}\le 1$, we have 
\begin{equation}\label{eq:5}
	\abs{\E[YZ]-\E[Y]\E[Z]}\le \sqrt{\Var[Y]\Var[Z]}.
\end{equation}
Applying \nameref{lemma4}, 
\begin{equation}\label{eq:6}
	\abs{\E[Y^2Z]-\E[Y^2]\E[Z]}\le 2\sqrt{\Var[Y]\Var[Z]}.
\end{equation}
Using the triangle inequality and the fact that $\abs{\E[Z]}\leq 1$,
\begin{multline*}
	\abs{\E[Y^2Z]-\E[Y]^2\E[Z]}\le 
	\abs{\E[Y^2Z]-\E[Y^2]\E[Z]}+\abs{\E[Y^2]\E[Z]-\E[Y]^2\E[Z]}\\
	\leq 2\sqrt{\Var[Y]\Var[Z]}+\Var[Y]\abs{\E[Z]}
	\leq 2\sqrt{\Var[Y]\Var[Z]}+\Var[Y]+\Var[Z].
\end{multline*}
The fact that the arithmetic mean is greater than the geometric mean,
\begin{equation*}
\sqrt{\Var[Y]\Var[Z]} \le (\Var[Y]+\Var[Z])/2,
\end{equation*}
then proves the claims.
\end{proof}

We now apply \nameref{lemma5} to $\E[s_n(t)i_n(t)]$, $\E[s_n(t)^2i_n(t)]$, and $\E[s_n(t)i_n(t)^2]$.  We can replace the inequality in \eqref{eq:4} by defining the corresponding functions $h_{1,n}(t), h_{2,n}(t),$ and $h_{3,n}(t)$ in $[-1,1]$ such that
\begin{subequations}\label{eq:8}
\begin{align}
\label{eq:8a}
\E[s_n(t)i_n(t)]&=\E[s_n(t)]\E[i_n(t)]+h_{1,n}(t)(\Var[s_n(t)]+\Var[i_n(t)])/2,\\
\label{eq:8b}
\E[s_n(t)^2i_n(t)]&=\E[s_n(t)]^2\E[i_n(t)]+2h_{2,n}(t)(\Var[s_n(t)]+\Var[i_n(t)]),\\
\label{eq:8c}
\E[s_n(t)i_n(t)^2]&=\E[s_n(t)]\E[i_n(t)]^2+2h_{3,n}(t)(\Var[s_n(t)]+\Var[i_n(t)]).\\
\intertext{We also define $h_{4,n}(t)$ in $[0,1]$ such that} 
\label{eq:8d}
\Var[i_n(t)]&=h_{4,n}(t)(\Var[s_n(t)]+\Var[i_n(t)]),
\end{align}
\end{subequations}
which we will require in the next step, where we establish a system of ODEs equivalent to equations (\ref{eq:3}) using only the two mean-field equations (\ref{eq:3a})--(\ref{eq:3b}) and an equation describing the dynamics of the total variance in the system.

\subsection{Closed system of ODEs}\label{sec:closure}

Since the inequalities in \nameref{lemma5} involve the variance, it will be convenient to replace (\ref{eq:3c})--(\ref{eq:3d}) by an equation for $(\Var[s]+\Var[i])'$, representing the dynamics of the total stochasticity in the system. Since $\Var[Y]=\E[Y^2]-\E[Y]^2$, it follows that $\Var[Y]'=\E[Y^2]'-2\E[Y]\E[Y]'$ and thus,
\begin{align}
\nonumber
\Var[s]' &=-\tau(2\E[s^2i]-\E[si]/n)+2\tau \E[s]\E[si] \\
& =-2\tau \E[s^2i]+\tau \E[si](2\E[s]+1/n), 
\nonumber\\
\nonumber
\Var[i]' &=\tau (2 \E[si^2]+\E[si]/n)-\gamma(2 \E[i^2]- \E[i]/n)-2\E[i](\tau \E[si]-\gamma \E[i]) \\
& = 2\tau \E[si^2]+\tau \E[si](-2 \E[i]+1/n)-2\gamma \Var[i]+\gamma \E[i]/n, 
\nonumber\\
\label{eq:9c}
(\Var[s]+\Var[i])'& =2\tau (\E[si^2]- \E[s^2i]) \\
\nonumber
&\quad +2\tau \E[si](\E[s]- \E[i]+1/n)-2 \gamma \Var[i]+\gamma \E[i]/n.
\end{align}
Hence our current system of differential equations is (\ref{eq:3a}),(\ref{eq:3b}), and (\ref{eq:9c}). We now use the substitutions defined in \eqref{eq:8} for $\E[si]$, $\E[s^2i]$, and $\E[si^2]$ to obtain the following system of ODEs:
\begin{subequations}\label{eq:10}
\begin{align}
\label{eq:10a}
\E[s]'&=-\tau(\E[s]\E[i]+h_{1}(\Var[s]+\Var[i])/2),\\
\label{eq:10b}
\E[i]'&=\tau(\E[s]\E[i]+h_{1}(\Var[s]+\Var[i])/2)-\gamma \E[i],\\
\nonumber
(\Var[s]+\Var[i])'&=2\tau(\E[s]\E[i]^2+2h_{3}(\Var[s]+\Var[i]))\\
\nonumber
&\quad -2\tau(\E[s]^2\E[i]+2h_{2}(\Var[s]+\Var[i]))\\
\nonumber
&\quad +2\tau(\E[s]\E[i]+h_{1}(\Var[s]+\Var[i])/2)(\E[s]-\E[i]+1/n)\\
\nonumber 
& \quad -2\gamma h_{4}(\Var[s]+\Var[i])+\gamma \E[i]/n, \\
\nonumber
\quad &=(4\tau(h_{3}-h_{2})+\tau h_{1}(\E[s]-\E[i]+1/n)-2\gamma h_{4})\\
\label{eq:10c}
&\quad \cdot(\Var[s]+\Var[i])+(2\tau \E[s]\E[i]+\gamma \E[i])/n.
\end{align}
\end{subequations}

Equations (\ref{eq:10a})--(\ref{eq:10c}) define a proper system of differential equations with the same variables ($\E[s]$, $\E[i]$, and $\Var[s]+\Var[i]$) on the right hand side as on the left hand side. To be more explicit, we define the vector $z_n :=(\E[s_n],\E[i_n],\Var[s_n]+\Var[i_n])$. As with the original state variables $S$ and $i$, we may sometimes write $z$ or $h_k$, dropping the dependence on $n$ and $t$. Thus, $z_n(t)$ is a solution of the initial value problem, $z'=g_n(t,z;1/n)$ and $z(0)=z_{0,n}$, where 
\begin{subequations}\label{eq:11}
\begin{align}
\label{eq:11a}
g_{1,n}(t,z;\epsilon)&:=-\tau(z_1z_2+h_{1,n}(t)z_3/2),\\
\label{eq:11b}
g_{2,n}(t,z;\epsilon)&:=\tau(z_1z_2+h_{1,n}(t)z_3/2)-\gamma z_2,\\
\label{eq:11c}
g_{3,n}(t,z;\epsilon)&:=(4\tau(h_{3,n}(t)-h_{2,n}(t))+\tau h_{1,n}(t)(z_1-z_2+\epsilon)\\
\nonumber
&\quad -2\gamma h_{4,n}(t))z_3+(2\tau z_1z_2+\gamma z_2)\epsilon,\\
\label{eq:11d}
z_{0,n}&:=(s_n(0),i_n(0),0).
\end{align}
\end{subequations} 

Note that the mean-field solution, $\bar{z}(t):=(y_1(t),y_2(t),0)$, solves (for any $n$) the initial  value problem $z'=g_n(t,z;0)$ and $z(0)=z_0$, where $z_0:=(s_0,i_0,0)$. Our next step is to bound $\norm{z_n-\bar{z}}$.

\subsection{Convergence}\label{sec:convergence}
\begin{lemma}[Lemma 6]\label{lemma6}
Consider the initial value problems $x'=f_1(t,x)$, $x(0)=x_1$ and $x'=f_2(t,x)$, $x(0)=x_2$ with solutions $\varphi_1(t)$ and $\varphi_2(t)$ respectively.  If $f_1$ is Lipschitz in $x$ with constant $L$ and \linebreak $\norm{f_1(t,x)-f_2(t,x)}\le M$, then $\norm{\varphi_1(t)-\varphi_2(t)}\le (\norm{x_1-x_2}+M/L)e^{Lt}-M/L$.
\end{lemma}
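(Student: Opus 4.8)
The plan is to prove a Gr\"onwall-type estimate by reducing to the scalar comparison function. First I would define the scalar quantity $\rho(t):=\norm{\varphi_1(t)-\varphi_2(t)}$, which is absolutely continuous wherever the solutions are differentiable. Writing the integral form of both IVPs, $\varphi_i(t)=x_i+\int_0^t f_i(\sigma,\varphi_i(\sigma))\,d\sigma$, and subtracting, the triangle inequality gives
\begin{equation*}
\rho(t)\le \norm{x_1-x_2}+\int_0^t \norm{f_1(\sigma,\varphi_1(\sigma))-f_1(\sigma,\varphi_2(\sigma))}\,d\sigma+\int_0^t\norm{f_1(\sigma,\varphi_2(\sigma))-f_2(\sigma,\varphi_2(\sigma))}\,d\sigma.
\end{equation*}
The first integrand is bounded by $L\rho(\sigma)$ using the Lipschitz hypothesis on $f_1$, and the second by $M$ using the uniform bound. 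Hence
\begin{equation*}
\rho(t)\le \norm{x_1-x_2}+Mt+L\int_0^t\rho(\sigma)\,d\sigma.
\end{equation*}

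The second step is to apply the integral form of Gr\"onwall's inequality to this bound. With $u(t):=\rho(t)$, $a(t):=\norm{x_1-x_2}+Mt$, and constant $L$, Gr\"onwall gives $u(t)\le a(t)+L\int_0^t a(\sigma)e^{L(t-\sigma)}\,d\sigma$. I would then simply evaluate this integral: the $\norm{x_1-x_2}$ piece contributes $\norm{x_1-x_2}(e^{Lt}-1)$ and the $M\sigma$ piece contributes $(M/L)(e^{Lt}-1)-Mt$ after an elementary integration by parts. Adding the $a(t)=\norm{x_1-x_2}+Mt$ term back, the $Mt$ terms cancel and the result collapses to
\begin{equation*}
\rho(t)\le\Bigl(\norm{x_1-x_2}+\tfrac{M}{L}\Bigr)e^{Lt}-\tfrac{M}{L},
\end{equation*}
which is exactly the claimed bound. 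Alternatively, and perhaps more cleanly, I would guess the answer and verify it directly: set $\psi(t):=(\norm{x_1-x_2}+M/L)e^{Lt}-M/L$, check $\psi(0)=\norm{x_1-x_2}\ge\rho(0)$, note $\psi'(t)=L\psi(t)+M$, and show that the differential inequality $\rho'(t)\le L\rho(t)+M$ (valid a.e., obtained by differentiating the integral bound or by estimating $\limsup_{h\to0^+}(\rho(t+h)-\rho(t))/h$) together with the matching initial condition forces $\rho\le\psi$ by a standard comparison argument.

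The only genuine subtlety is that $\rho(t)=\norm{\varphi_1(t)-\varphi_2(t)}$ need not be differentiable where $\varphi_1(t)=\varphi_2(t)$ or where the norm is not smooth, so the differential-inequality route requires a little care with one-sided Dini derivatives, or an $\epsilon$-regularization such as working with $\sqrt{\rho^2+\epsilon^2}$ and letting $\epsilon\to0$. I expect this to be the main obstacle to a fully rigorous write-up, though it is entirely routine. The integral-inequality route sidesteps it: it only uses that $\varphi_1,\varphi_2$ are continuous (so that the integrands are integrable) and the explicit Gr\"onwall computation, with no differentiability of $\rho$ needed. I would therefore present the integral version as the main line of argument. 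Note also that existence of $\varphi_1$ and $\varphi_2$ on $[0,T]$ is assumed as part of the hypotheses (the lemma speaks of "the solutions"), so no existence argument is required here; in the application, $f_1=g_n(\cdot,\cdot;0)$ is Lipschitz on the compact invariant region and $\varphi_1=\bar z$, $\varphi_2=z_n$ are already known to exist globally.
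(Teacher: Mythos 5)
Your main argument is exactly the paper's proof: pass to the integral form, apply the triangle inequality to split off the Lipschitz term $L\rho(\sigma)$ and the perturbation term $M$, and conclude with the integral form of Gr\"onwall's inequality, which yields $(\norm{x_1-x_2}+M/L)e^{Lt}-M/L$. The proposal is correct and follows essentially the same route as the paper (which likewise uses the integral version precisely to avoid the differentiability issues you flag).
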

\begin{proof} We give a proof in the appendix using an argument often found in the proof of the standard ODE existence theorem.
\end{proof}

To apply \nameref{lemma6} we first note that the domain of $z$ for $g_n(t,z;\epsilon)$ is bounded: $\E[s]$ and $\E[i]$ are in $[0,1]$ and $\Var[s]+\Var[i]$ is in $[0,2]$. The domain for $\epsilon$ is $[0,1]$. Since $g_n(t,z;0)$ is a polynomial in $z$, we can determine a Lipschitz constant with respect to $z$ by bounding $\sum_{j=1}^3 \sum_{k=1}^3 \abs{\frac{\delta g_{j,n}}{\delta x_k}}$. Using the bounds on the domain, on $\epsilon$, and on the functions $h_{k,n}$, $k=1,\ldots,4$, we determine that $L=22\tau + 2\gamma$ is such a Lipschitz constant. Using the bounds on $z$ and the functions $h_{k,n}$, we define $M(\epsilon)$ as follows:
\begin{equation*}
\begin{split}
\norm{g_n(t,z;\epsilon)-g_n(t,z;0)}&=\abs{g_{3,n}(t,z;\epsilon)-g_{3,n}(t,z;0)} \\
&=\abs{\tau h_{1,n}(t)\epsilon z_3 + (2\tau z_1 z_2+\gamma z_2)\epsilon} \\
&\le (4\tau+\gamma)\epsilon =: M(\epsilon). \nonumber
\end{split}
\end{equation*}

We now apply \nameref{lemma6} by letting $f_1(t,x)=g_n(t,x;0)$, $f_2(t,x)=g_n(t,x,1/n)$, $x_1=z_0$, $x_2=z_{0,n}$, $\varphi_1 = \bar{z}$, and $\varphi_2 = z_n$. Then the difference between $z_n(t)$ and $\bar{z}(t)$ is
\begin{equation*}
\norm{z_n(t)-\bar{z}(t)}\le \Bigl(\norm{z_0-z_{0,n}}+\frac{M(1/n)}{L}\Bigr)e^{Lt}-\frac{M(1/n)}{L},
\end{equation*}

Thus, for $t\le T$,
\begin{equation}\label{eq:12}
\norm{z_n(t)-\bar{z}(t)}\le \Bigl(\norm{z_0-z_{0,n}}+\frac{M(1/n)}{L}\Bigr)e^{LT}-\frac{M(1/n)}{L}.
\end{equation}
Since $M(\epsilon)\to 0$ as $\epsilon \rightarrow 0$ and $z_{0,n}\to z_0$, the right hand side of (\ref{eq:12}) goes to zero as $n \rightarrow \infty$. Hence we have uniform convergence of $z_n \rightarrow \bar{z}$ over any finite time interval $[0,T]$.

We now show that $z_n \rightarrow \bar{z}$ implies convergence in mean-square, i.e., $\E[\norm{(s_n, i_n)}^2] \rightarrow 0$. In finite dimensions, all norms are equivalent for the purposes of proving convergence. For convenience we choose the 2-norm for $\norm{(s_n, i_n) - y}_2$ and the 1-norm for $\norm{z_n-\bar{z}}_1$. Recalling that $\Var[Y]=\E[Y^2]-\E[Y]^2$, we decompose the mean-square error into a bias and a variance term:
\begin{equation*}
\begin{split}
\E[\norm{(s_n,i_n)-y}_2^2]&=\E[(s_n-y_1)^2]+\E[(i_n-y_2)^2] \\
&=(\E[s_n]-y_1)^2+(\E[i_n]-y_2)^2+\Var[s_n]+\Var[i_n] \\
&\leq\abs{\E[s_n]-y_1}+\abs{\E[i_n]-y_2}+\Var[s_n]+\Var[i_n] \\
&=\abs{z_{n,1}-\bar{z}_1}+\abs{z_{n,2}-\bar{z}_2}+\abs{z_{n,3}-\bar{z}_3}= \norm{z_n-\bar{z}}_1 
\end{split}
\end{equation*}
where of course $\bar{z}_3=0$, and the inequality holds because $\E[s_n]$, $y_1$, $\E[i_n]$, and $y_2$ are in $[0,1]$. Thus uniform convergence of $\norm{z_n-\bar{z}}_1 \rightarrow 0$ for $t\le T$ implies the same for $\E[\norm{(s_n,i_n)-y}_2^2] \rightarrow 0$, proving our claim.

\section{Conclusion}

We extended the elementary approach of \cite{Armbruster2015} to show that the expected fractions of nodes of the stochastic SIR process on a complete graph converge uniformly in mean-square on finite time intervals to the solution of the mean-field ODE model. We set up differential equations for the first and second moments of the infected and susceptibles.  Our main tool was \nameref{lemma5}, two simple probabilistic inequalities involving expectations and variances. They let us to bound in the right hand side of the differential equations, the difference between the stochastic terms $\E[si]$, $\E[si^2]$, and $\E[si^2]$ and the corresponding mean-field terms.

\begin{figure}
	\includegraphics[width=\textwidth]{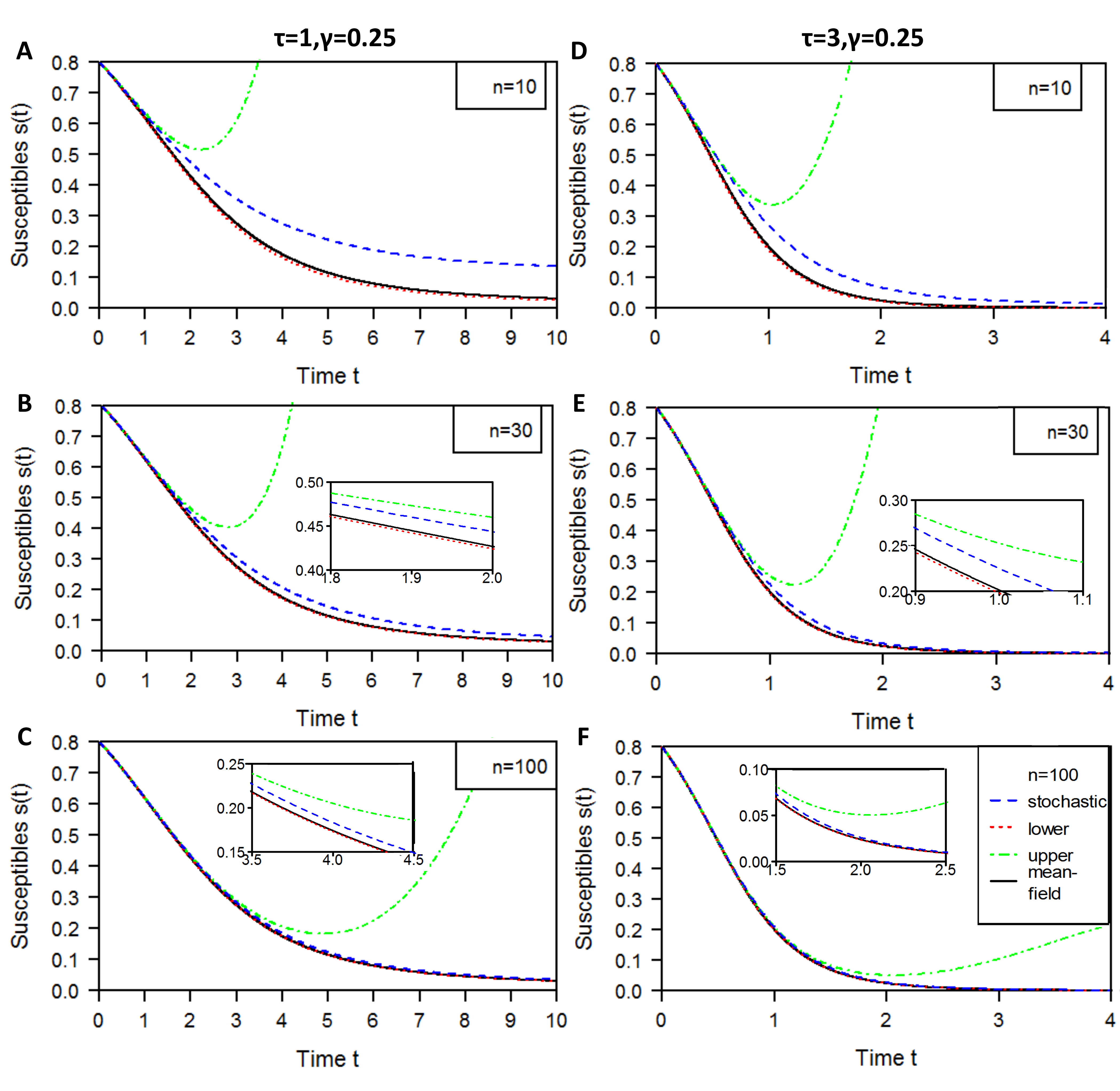}
	\centering
	\caption{Fraction of susceptibles of SIR process: the true expectation, $\E[s_n(t)]$ (blue dashed curve); the mean-field approximation, $y_1(t)$ (black curve); apparent upper (green dashed-dotted curve) and lower (red dotted curve) bounds on $\E[s_n(t)]$.  The upper and lower bounds correspond to component $z_1(t)$ of solutions to $z'=g_n(t,z;1/n),z(0)=z_0$ (see \eqref{eq:11}) where we set $(h_{1,n},h_{2,n},h_{3,n},h_{4,n})$ to constants: $(0.5,1,0.5,1)$ for the lower bound and $(-1,-0.8,-0.4,0.5)$ for the upper bound in panels A, B, and C; and $(0.5,1,0.5,1)$ for the lower bound and $(-1,-1,-0.8,1)$ for the upper bound in panels D, E, and F.  The panels vary the number of nodes ($n=10,30,100$) and the transmission rate ($\tau=1,3$).  The recovery rate is $\gamma=0.25$.}
	\label{fig2}
\end{figure}

We want to note that our elementary approach generalizes to related epidemic models such as the susceptible-infected-recovered-susceptible (SIRS), susceptible-exposed-infected-susceptible (SEIS), and susceptible-exposed-infected-recovered (SEIR) model.  The corresponding mean-field models are linear ODEs with the addition of quadratic terms for the number of new infections (i.e., the $S\cdot I$ term in equations (\ref{eq:1a})--(\ref{eq:1b})). We go through the key steps to generalization using the SEIR model as an example.  First, the existing differential equations in the ODE systems (\ref{eq:2}) and (\ref{eq:3}) have to be modified for the SEIR model and differential equations have to be added which describe the dynamics of the first and second moments of the additional state variable (i.e., the fraction of nodes exposed). Applying \nameref{lemma5} we again bound the difference of the stochastic $\E[si]$, $\E[s^2i]$, and $\E[si^2]$ terms from the mean-field equations since the SEIR and all the other models discussed above have the same or a similar quadratic terms for the rate of new infections. To establish the ODE system equivalent to (\ref{eq:10}), we first add the variance of the fraction exposed to the equation for the total stochasticity (\ref{eq:9c}).  We then add the variance of the exposed fraction to the right hand side of (\ref{eq:8d}) and add a similar equation defining $h_{5,n}$ with the variance of the exposed fraction on the left hand side.  Thus, the resulting equivalent ODE system consists of the mean-field equations describing the dynamics of all the expected fraction of nodes as in style of (\ref{eq:10a}) and (\ref{eq:10b}) and one equation describing the dynamics of the total variance in the system (\ref{eq:10c}). Then Section~\ref{sec:convergence} can be used to prove convergence.

Going further, we point out that some choices for $h_{k,n}(t)$ when solving \eqref{eq:11} appear to give lower or upper bounds on $\E[s_n(t)]$ or $\E[i_n(t)]$ (see \nameref{lemma5}, (\ref{eq:8a})--(\ref{eq:8c}), and (\ref{eq:10})). Figure \ref{fig2} shows the fraction of susceptibles for some numerical examples where this appears to be the case.  In those examples we set the $h_{k,n}(t)$ to constants determined by trial-and-error. It is worth noting that while the upper bounds are only tight until some finite time $T$ (which of course increases as $n\to\infty$) after which they diverge to infinity, the lower bounds appear to be tight for all times. Whether these observations can be proven is an area for further research.  An additional area for future research is proving that the uniform convergence holds for all times and not just a finite interval.

This extension of our previously introduced elementary approach \citep{Armbruster2015} to the SIR case forwards our agenda of showing that mean-field convergence results can be tackled using only basic ODE theory. Using this approach, we hope to open up the field to researchers more comfortable with ODEs than stochastic processes and allow more researchers to rigorously analyze the accuracy of compartmental models.

\textbf{Acknowledgements.} We thank Peter L. Simon, Tom Britton, and two anonymous referees for helpful comments.

\bibliography{refSIR}

\begin{thebibliography}{}

\bibitem[\protect\astroncite{Anderson and May}{1991}]{AndMay1991}
Anderson, R.~M. and May, R.~M. (1991).
\newblock {\em Infectious Diseases of Humans {D}ynamics and {C}ontrol}.
\newblock Oxford University Press.

\bibitem[\protect\astroncite{Andersson and Britton}{2000}]{Andersson2000}
Andersson, H. and Britton, T. (2000).
\newblock {\em Stochastic epidemic models and their statistical analysis.
  Lecture Notes in Statistics}, chapter~5.
\newblock Springer.

\bibitem[\protect\astroncite{Armbruster and Beck}{2016}]{Armbruster2015}
Armbruster, B. and Beck, E. (2016).
\newblock An elementary proof of convergence to the mean-field equations for an
  epidemic model.
\newblock {\em The IMA Journal of Applied Mathematics}.
\newblock In press.

\bibitem[\protect\astroncite{Bena\"{i}m and Le~Boudec}{2008}]{Bena2008}
Bena\"{i}m, M. and Le~Boudec, J.-Y. (2008).
\newblock A class of mean-field interaction models for computer and
  communication systems.
\newblock {\em Performance Evaluation}, 65(11-12):823--838.

\bibitem[\protect\astroncite{Bortolussi et~al.}{2013}]{Bortolussi2013}
Bortolussi, L., Hillston, J., Latella, D., and Massink, M. (2013).
\newblock Continuous approximation of collective system behavior: A tutorial.
\newblock {\em Performance Evaluation}, 70(5):317--349.

\bibitem[\protect\astroncite{Cardelli}{2008}]{Cardelli2008}
Cardelli, L. (2008).
\newblock From processes to {ODE}s by chemistry.
\newblock {\em TCNature, International Federation for Information Processing
  (Springer, Boston)}, 273:261--281.

\bibitem[\protect\astroncite{Daley and Kendall}{1964}]{Daley1964}
Daley, D. and Kendall, D. (1964).
\newblock Epidemics and rumours.
\newblock {\em Nature}, 204(225):1118.

\bibitem[\protect\astroncite{Daley and Kendall}{1965}]{Daley1965}
Daley, D. and Kendall, D. (1965).
\newblock Stochastic rumors.
\newblock {\em IMA}, 1(1):42--55.

\bibitem[\protect\astroncite{Decreusefond et~al.}{2012}]{Decreusefond2012}
Decreusefond, L., Dhersin, J.-S., Moyal, P., and Chi~Tran, V. (2012).
\newblock Large graph limit for an {SIR} process in random network with
  heterogeneous connectivity.
\newblock {\em Annals of Applied Probability}, 22(2):541--575.

\bibitem[\protect\astroncite{Ethier and Kurtz}{1986}]{EthierKurtz1986}
Ethier, S.~N. and Kurtz, T.~G. (1986).
\newblock {\em Markov Processes: Characterization and Convergence}, chapter
  11.2.
\newblock Wiley series in probability and statistics. Wiley.

\bibitem[\protect\astroncite{Giraudo}{2014}]{Giraudo2014}
Giraudo, D. (2014).
\newblock Bound the variance of the product of two random varables.
\newblock Mathematics Stack Exchange.
\newblock URL:http://math.stackexchange.com/q/1044864 (version: 2014-11-30).

\bibitem[\protect\astroncite{Hale}{2009}]{hale2009ordinary}
Hale, J. (2009).
\newblock {\em Ordinary Differential Equations}, chapter 1.6.
\newblock Dover Books on Mathematics Series. Dover Publications.

\bibitem[\protect\astroncite{Keeling}{1999}]{Keeling1999b}
Keeling, M.~J. (1999).
\newblock The effects of local spatial structure on epidemiological invasions.
\newblock {\em Proc. R. Soc. Lond. B}, 266:859--867.

\bibitem[\protect\astroncite{Kephart and White}{1993}]{Kephart1993}
Kephart, J. and White, S. (1993).
\newblock Measuring and modeling computer virus prevalence.
\newblock {\em Proceedings, 1993 IEEE Computer Society Symposium on Research in
  Security and Privacy}, pages 2--15.

\bibitem[\protect\astroncite{Kermack and McKendrick}{1927}]{Kermack1927}
Kermack, W. and McKendrick, A. (1927).
\newblock A contribution to the mathmatical theory of epidemics.
\newblock {\em Proceedings of the Royal Society of London. Series A: Containing
  Papers of a Mathematical and Physical Character}, 115(772):700--721.

\bibitem[\protect\astroncite{Kurtz}{1970}]{Kurtz1970}
Kurtz, T.~G. (1970).
\newblock Solutions of ordinary differential equations as limits of pure jump
  {M}arkov processes.
\newblock {\em Journal of Applied Probability}, 7:49--58.

\bibitem[\protect\astroncite{Kurtz}{1971}]{Kurtz1971}
Kurtz, T.~G. (1971).
\newblock Limit theorems for sequences of jump {M}arkov processes approximating
  ordinary differential processes.
\newblock {\em Journal of Applied Probability}, 8(2):344--356.

\bibitem[\protect\astroncite{May and Anderson}{1983}]{May1983}
May, R.~M. and Anderson, R.~M. (1983).
\newblock Epidemiology and genetics in the coevolution of parasites and hosts.
\newblock {\em Proc R Soc Lond B Biol Sci}, 219(1216):281--313.

\bibitem[\protect\astroncite{Rand}{1999}]{Rand1999}
Rand, D.~A. (1999).
\newblock Correlation equations and pair approximations for spatial ecologies.
\newblock {\em CWI Quarterly}, 12(3\&4):329--368.

\bibitem[\protect\astroncite{Ross}{2007}]{Ross2007}
Ross, S. (2007).
\newblock {\em Introduction to Probability Models}, chapter 6.4.
\newblock Academic Press, 9th edition.

\bibitem[\protect\astroncite{Simon and Kiss}{2013}]{simon2010}
Simon, P.~L. and Kiss, I.~Z. (2013).
\newblock From exact stochastic to mean-field {ODE} models: a case study of
  three different approaches to prove convergence results.
\newblock {\em IMA Journal of Applied Mathematics}, 78(5):945--964.

\bibitem[\protect\astroncite{Volz}{2008}]{Volz2008}
Volz, E. (2008).
\newblock {SIR} dynamics in random networks with heterogeneous connectivity.
\newblock {\em J Math Biol}, 56(3):293--310.

\end{thebibliography}

\appendix
\section*{Appendix}
\begin{lemma}[Lemma 6]
Consider the initial value problems $x'=f_1(t,x)$, $x(0)=x_1$ and $x'=f_2(t,x)$, $x(0)=x_2$ with solutions $\varphi_1(t)$ and $\varphi_2(t)$ respectively.  If $f_1$ is Lipschitz in $x$ with constant $L$ and $\norm{f_1(t,x)-f_2(t,x)}\le M$, then $\norm{\varphi_1(t)-\varphi_2(t)}\le (\norm{x_1-x_2}+M/L)e^{Lt}-M/L$.
\end{lemma}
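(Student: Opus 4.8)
The plan is to reduce both initial value problems to integral equations and then apply Gr\"onwall's inequality in the standard way. First I would write, for $i=1,2$,
\[
\varphi_i(t) = x_i + \int_0^t f_i(\sigma,\varphi_i(\sigma))\,d\sigma,
\]
which is justified since $\varphi_i$ solves the corresponding IVP. Subtracting and using the triangle inequality gives
\[
\norm{\varphi_1(t)-\varphi_2(t)} \le \norm{x_1-x_2} + \int_0^t \norm{f_1(\sigma,\varphi_1(\sigma))-f_2(\sigma,\varphi_2(\sigma))}\,d\sigma.
\]
Inside the integral I would insert and subtract the cross term $f_1(\sigma,\varphi_2(\sigma))$, so that the integrand splits into $\norm{f_1(\sigma,\varphi_1(\sigma))-f_1(\sigma,\varphi_2(\sigma))}$, bounded by $L\norm{\varphi_1(\sigma)-\varphi_2(\sigma)}$ via the Lipschitz hypothesis on $f_1$, plus $\norm{f_1(\sigma,\varphi_2(\sigma))-f_2(\sigma,\varphi_2(\sigma))}$, bounded by $M$ via the second hypothesis.

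Writing $u(t) := \norm{\varphi_1(t)-\varphi_2(t)}$ and $a := \norm{x_1-x_2}$, this yields the integral inequality
\[
u(t) \le a + Mt + L\int_0^t u(\sigma)\,d\sigma.
\]
Now I would apply Gr\"onwall's inequality. Rather than invoke the general form, it is cleanest here to define $v(t) := a + Mt + L\int_0^t u(\sigma)\,d\sigma$, note $u(t)\le v(t)$, and observe $v'(t) = M + Lu(t) \le M + Lv(t)$. Hence $(v(t)+M/L)' \le L(v(t)+M/L)$, so $v(t)+M/L \le (v(0)+M/L)e^{Lt} = (a+M/L)e^{Lt}$. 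Therefore
\[
u(t) \le v(t) \le \Bigl(a + \tfrac{M}{L}\Bigr)e^{Lt} - \tfrac{M}{L},
\]
which is exactly the claimed bound.

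I do not expect a serious obstacle here; the only points requiring a little care are the justification of the integral-equation form (immediate once we assume $\varphi_i$ is a solution, which the hypothesis grants), and making sure the Lipschitz bound is applied to $f_1$ evaluated at the two \emph{different} trajectories $\varphi_1,\varphi_2$ rather than mismatched arguments — this is precisely what the add-and-subtract step handles, and it is why the hypothesis only needs $f_1$ (not $f_2$) to be Lipschitz. The differential trick turning the Gr\"onwall estimate into the explicit $(a+M/L)e^{Lt}-M/L$ form is the one slightly non-obvious algebraic move, but it is the same device used in the standard proof of the Picard--Lindel\"of continuous-dependence estimate, which is what the authors allude to.
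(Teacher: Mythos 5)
Your proposal is correct and follows essentially the same route as the paper's appendix proof: integral form, triangle inequality, insertion of the cross term $f_1(\sigma,\varphi_2(\sigma))$ to split the integrand into a Lipschitz piece and an $M$ piece, then Gr\"onwall. The only cosmetic difference is that you inline a short derivation of the specialized Gr\"onwall bound, whereas the paper cites it as a separate lemma.
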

\begin{proof}
Using the integral form of the differential equations,
\[ \norm{\varphi_1(t)-\varphi_2(t)}
 = \norm{ x_1-x_2+\int_0^t f_1(u,\varphi_1(u)) du - \int_0^t f_2(u,\varphi_2(u)) du}.\]
Using the triangle inequality,
\[ \norm{\varphi_1(t)-\varphi_2(t)}
 \leq \norm{x_1-x_2}+\norm{\int_0^t f_1(u,\varphi_1(u)) du - \int_0^t f_2(u,\varphi_2(u)) du}.\]
Using the integral form of the triangle inequality,
\[ \norm{\varphi_1(t)-\varphi_2(t)}
 \leq \norm{ x_1-x_2} + \int_0^t \norm{f_1(u,\varphi_1(u)) - f_2(u,\varphi_2(u))} du.\]
Applying the triangle inequality again,
\begin{multline*}
 \leq \norm{ x_1-x_2} \\
 +\int_0^t \bigl(\norm{f_1(u,\varphi_1(u))-f_1(u,\varphi_2(u))} + \norm{f_1(u,\varphi_2(u))-f_2(u,\varphi_2(u))}\bigr) du.
\end{multline*}
Applying the assumptions of this lemma,
\[ \norm{\varphi_1(t)-\varphi_2(t)} \leq \norm{ x_1-x_2} + \int_0^t (L \norm{\varphi_1(u)-\varphi_2(u)} + M) du.\]
Applying a specialized form of \nameref{Gronwall} then proves the claim.
\end{proof}

\begin{lemma}[Gronwall's Inequality]\label{Gronwall}
Suppose for $t\geq 0$, that $\theta(t)$ is a continuous nonnegative function; $L,M\geq 0$; and $\theta(t)\leq \theta(0)+\int_0^t (L\theta(u)+M) du$. Then for $t\geq 0$, $\theta$ is bounded by the solution to the initial value problem $x'=Lx+M$, $x(0)=\theta(0)$:
\[ \theta(t) \leq (\theta(0)+M/L)e^{Lt}-M/L.\]
\end{lemma}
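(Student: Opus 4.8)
The plan is to turn the integral inequality into a differential inequality for its right-hand side and then solve that inequality explicitly with an integrating factor. First I would set $R(t) := \theta(0) + \int_0^t (L\theta(u)+M)\,du$, so the hypothesis is precisely $\theta(t) \le R(t)$ for all $t\ge 0$. Because $\theta$ is continuous, the integrand $u\mapsto L\theta(u)+M$ is continuous, so by the fundamental theorem of calculus $R$ is continuously differentiable with $R(0)=\theta(0)$ and $R'(t)=L\theta(t)+M$. Substituting the hypothesis $\theta(t)\le R(t)$ and using $L\ge 0$ gives the differential inequality $R'(t)\le L R(t)+M$.

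Next I would integrate this. Assuming $L>0$ (in the application $L=22\tau+2\gamma>0$; if $L=0$ the hypothesis already reads $\theta(t)\le\theta(0)+Mt$, which is the stated bound in its $L\to 0$ limiting form), multiply $R'(t)-LR(t)\le M$ by the positive factor $e^{-Lt}$ to obtain $\frac{d}{dt}\bigl(e^{-Lt}R(t)\bigr)=e^{-Lt}\bigl(R'(t)-LR(t)\bigr)\le M e^{-Lt}$. Integrating from $0$ to $t$ and using $R(0)=\theta(0)$ gives $e^{-Lt}R(t)-\theta(0)\le \frac{M}{L}\bigl(1-e^{-Lt}\bigr)$, and multiplying through by $e^{Lt}$ yields $R(t)\le\bigl(\theta(0)+M/L\bigr)e^{Lt}-M/L$. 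Since $\theta(t)\le R(t)$, this is exactly the asserted inequality; one also checks directly that the right-hand side solves the initial value problem $x'=Lx+M$, $x(0)=\theta(0)$, which is how the statement phrases it.

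I do not expect a real obstacle here — this is the classical Gronwall argument. The only points needing a word of care are that $R$ is genuinely differentiable (which is precisely why continuity of $\theta$ is assumed) and that multiplying the differential inequality by $e^{-Lt}>0$ and integrating over $[0,t]$ preserve the inequality. An alternative, equally elementary route is a direct comparison argument: let $x(t)=(\theta(0)+M/L)e^{Lt}-M/L$ be the solution of the IVP and argue that $\theta$ can never exceed $x$ by examining the first time the two curves would cross; but the integrating-factor computation above is shorter, and is what I would write up.
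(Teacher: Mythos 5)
Your proof is correct. The paper does not actually prove this lemma --- it simply cites Lemma 6.2 of Hale's ODE text --- so your integrating-factor argument (differentiate the right-hand side $R(t)$ of the integral inequality, bound $R'\le LR+M$ using $\theta\le R$ and $L\ge 0$, multiply by $e^{-Lt}$, and integrate) supplies exactly the standard argument the citation points to. Your remark on the $L=0$ case is a worthwhile extra, since the stated bound $(\theta(0)+M/L)e^{Lt}-M/L$ is literally undefined there and only makes sense as the limit $\theta(0)+Mt$; in the paper's application $L=22\tau+2\gamma>0$, so nothing is lost.
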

\begin{proof}This result can be found in any graduate ODE text such as Lemma 6.2 in \citet{hale2009ordinary}.
\end{proof}

\end{document}